\newtheorem{thm}{Theorem}[section]
\newtheorem{lemma}[thm]{Lemma}
\begin{document}

\begin{center}
{\Large\bf Having cut-points is not a Whitney reversible property}
\end{center}

\begin{center}
{\bf Eiichi Matsuhashi}
\end{center}

\begin{center}
Department of Mathematics, Faculty of Engineering, Yokohama National University, Yokohama, 240-8501, Japan
\end{center}

\vspace{5mm}

\begin{abstract}
 We show that the property of having cut-points is not a Whitney reversible property. This answers in the negative a question 
posed by Illanes and Nadler.

\end{abstract}

\footnotetext[1]{AMS Subject Classification: Primary 54B20; 
Secondary 54F15.} 
\footnotetext[2]{Key words and phrases:  Whitney reversible property, 
cut-point,  
terminal continuum, atomic map.}

\section{Introduction}
\hspace{1pc}In this note, all spaces are separable metrizable spaces and maps 
are continuous.
We denote the  interval [0, 1] by $I$. A compact metric space 
is called a $compactum$ and $continuum$ means  
a connected compactum. 
If $X$ is a continuum $C(X)$ denotes the space of all subcontinua 
of $X$ with the topology generated by the Hausdorff metric. 

A topological property $P$ is called a $Whitney$ $property$ 
provided that if a continuum $X$ has property $P$, 
so does $\mu^{-1}(t)$ for each Whitney map (see p105 of \cite{nadler2})  
$\mu$ for $C(X)$ 
and for each $t \in [0,\mu(X))$. 
A topological property $P'$ is called a  
$Whitney$ $reversible$ $property$ provided that whenever 
$X$ is a continuum such that $\mu^{-1}(t)$ has property $P'$ for all 
Whitney maps $\mu$ for $C(X)$ and all $t \in (0,\mu(X))$, then 
$X$ has property $P'$. 
These properties have been studied by many authors (see \cite{nadler2}).

A point $p$ in a continuum $X$ is called a $cut$-$point$ $of$ $X$ 
provided that $X  \setminus \{p\}$ is disconnected. 
In this note we 
prove that the property of having cut-points is not a Whitney reversible 
property (it is known that the property of having cut-points is not a 
Whitney property, see Exercise 43.4 of \cite{nadler2}). 
This answers in the negative a question 43.3 of \cite{nadler2} 
posed by Illanes and Nadler.

\section{Main theorem}

A map $f: X \to Y$ between continua is called an $atomic$ $map$ 
if 
$f^{-1}(f(A))=A$ for each $A \in C(X)$ such that $f(A)$ is 
nondegenerate. 

A subcontinuum $T$ of a continuum X is $terminal$, 
if every subcontinuum of X which intersects both $T$ and its complement 
must contain $T$. It is known that a  map $f$ 
of a continuum $X$ onto a continuum $Y$ is 
atomic if and only if every fiber of $f$ is a terminal subcontinuum  of $X$.

The main aim  of this paper is to prove Theorem 2.2. To prove this theorem, 
we need the next result proved by Anderson \cite{lewis1}.

\begin{lemma}{\rm(see THEOREM of  \cite{lewis1})}
For each  continuum Y, there exist a  continuum X 
and a monotone open map $f: X \to Y$ such that $f^{-1}(y)$ is a nondegenerate 
terminal subcontinuum of  $X$ for each $y \in Y$.
\label{lemma:lewis1} 
\end{lemma}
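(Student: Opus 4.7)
\emph{Approach.} I would try to construct $X$ as an inverse limit of continua whose bonding maps are designed so that each fiber $f^{-1}(y)$ is forced, in the limit, to be a nondegenerate indecomposable continuum (of pseudo-arc type) embedded in a strongly ``twisted'' way. Indecomposability is the natural engine for terminality: a proper subcontinuum of an indecomposable continuum is nowhere dense, and if the bundle is twisted enough this blocks any subcontinuum of $X$ from picking out only a proper part of a fiber.

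\emph{A naive attempt and why it fails.} The first candidate is $X = Y \times P$ with $P$ the pseudo-arc and $f$ the projection to $Y$. Then $f$ is trivially monotone (fibers are copies of $P$) and open, and each fiber is nondegenerate. But the fibers are not terminal: for any $p\in P$ the horizontal slice $Y\times\{p\}$ is a subcontinuum that meets $\{y_0\}\times P$ only at $(y_0,p)$ and meets its complement everywhere else, without containing $\{y_0\}\times P$. So the trivial bundle must be replaced by a genuinely twisted one.

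\emph{Outline of the real construction.} I would build an inverse system $Y = Y_0 \xleftarrow{g_1} Y_1 \xleftarrow{g_2} Y_2 \xleftarrow{g_3}\cdots$ in which every $g_n$ is monotone and open, and each $g_n^{-1}(y)$ is a small ``crooked'' pseudo-arc-type continuum chained along a fine open cover of $Y_{n-1}$ (Bing-style). Set $X = \varprojlim Y_n$ with $f\colon X\to Y_0 = Y$ the canonical projection. Monotonicity and openness of $f$ follow from the corresponding properties of the $g_n$, and each fiber $f^{-1}(y)$ is the inverse limit of the fibers $g_n^{-1}(\cdot)$, hence a nondegenerate continuum (a pseudo-arc in the typical realization). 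To verify terminality, I would show that if $K\in C(X)$ meets $f^{-1}(y)$ but does not contain it, then at some finite stage $n$ the projection of $K$ to $Y_n$ is a proper subcontinuum of $g_n^{-1}(\text{proj of }K\text{ to }Y_{n-1})$ that the crookedness of $g_n$ explicitly rules out, yielding a contradiction.

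\emph{Main obstacle.} The real difficulty is constructing the bonding maps $g_n$ crookedly enough to forbid partial fibers in the limit, while keeping them monotone, open, and coherent so that $X$ is a continuum and $f$ remains open. This is the technical heart of Anderson's argument, essentially a parameterized Bing-type construction of pseudo-arcs over an arbitrary base $Y$; routine product or cone constructions are insufficient, and the combinatorial bookkeeping of the chains at each stage is where the substantive work lies.
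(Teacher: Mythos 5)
There is a genuine gap here, though an honestly acknowledged one. Note first that the paper does not prove this lemma at all: it is quoted verbatim as the THEOREM of Anderson's paper \emph{Atomic decompositions of continua} (Duke Math. J. 23 (1956)) and used as a black box, so there is no in-paper argument to compare yours against. Your write-up is a roadmap rather than a proof: you correctly diagnose why the trivial bundle $Y\times P$ fails (horizontal slices violate terminality of the fibers), you correctly identify that some parameterized Bing-type crookedness must be injected, and the soft parts of your outline are fine (with surjective monotone open bonding maps between continua, the inverse limit is a continuum, the projection $f=\pi_0$ is monotone and open since $\pi_0(\pi_n^{-1}(U))=g_{0,n}(U)$, and each fiber $\pi_0^{-1}(y)=\varprojlim g_{0,n}^{-1}(y)$ is a continuum). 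But the entire mathematical content of the lemma sits exactly in the step you defer: constructing bonding maps $g_n$ that are simultaneously monotone, open, and so crooked that no subcontinuum of the limit can meet a fiber without swallowing it, while also keeping every fiber nondegenerate (which needs a quantitative lower bound on fiber sizes surviving the limit, not just ``nondegenerate at each stage''). Your final paragraph concedes that this is ``where the substantive work lies,'' and indeed nothing in the proposal shows that such $g_n$ exist; asserting that the crookedness ``explicitly rules out'' a partial fiber at a finite stage is the claim to be proved, not a proof.

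There is also a smaller conceptual slip worth flagging: indecomposability of the fibers is neither sufficient nor necessary for terminality. Terminality of $f^{-1}(y)$ in $X$ is a statement about how the fiber sits inside the ambient continuum, not an intrinsic property of the fiber; an indecomposable fiber in a product is still non-terminal (your own counterexample shows this), and Anderson's fibers need not be pseudo-arcs. So ``indecomposability is the natural engine for terminality'' is a misleading heuristic; the engine is the relative crookedness of the embedding, i.e., the combinatorics of the refining chains or partitionings, which is precisely the part left unconstructed. As it stands the proposal does not establish the lemma; for the purposes of this paper the correct move is the one the author makes, namely to cite Anderson.
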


\begin{thm} There exists a continuum $Z$ such that: 

(A) $Z$ does not have a cut-point, and 

(B) $\mu^{-1}(s)$ has a cut-point for each Whitney map 
$\mu: C(Z) \to [0, \mu(Z)]$ and for each $s \in (0, \mu(Z))$. 
\end{thm}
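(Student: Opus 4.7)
The plan is to apply Lemma~\ref{lemma:lewis1} with $Y = I = [0,1]$ to obtain a continuum $Z$ together with an atomic monotone open surjection $f\colon Z \to I$ whose fibers $T_y := f^{-1}(y)$ are all nondegenerate terminal subcontinua of $Z$. The structural tool I will use throughout is the \emph{atomic dichotomy}: every $A \in C(Z)$ satisfies either $f(A) = \{y\}$ for some $y \in I$ (in which case $A \in C(T_y)$), or else $f(A) = [a,b]$ is nondegenerate and $A = f^{-1}([a,b])$, so $A \supseteq T_y$ for each $y \in [a,b]$.

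For part (A), suppose for contradiction that $p$ is a cut-point of $Z$ with $Z \setminus \{p\} = U \cup V$ a separation into disjoint nonempty open sets, and set $T := f^{-1}(f(p))$, a nondegenerate terminal subcontinuum containing $p$. Using the standard fact that $\overline U$ and $\overline V$ are subcontinua of $Z$, I apply terminality: each of $\overline U, \overline V$ either lies inside $T$ (if it misses $Z \setminus T$) or contains all of $T$ (forcing $T \setminus \{p\}$ into $U$ or $V$ respectively). Running through the four resulting combinations yields a contradiction in every case: $T \setminus \{p\} \subseteq U \cap V = \emptyset$ against the nondegeneracy of $T$; $Z = \overline U \cup \overline V \subseteq T$ against the nondegeneracy of $Y = I$; and the mixed cases force one of $U, V$ to be empty.

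For part (B), fix a Whitney map $\mu$ on $C(Z)$ and $s \in (0, \mu(Z))$. The function $g(y) := \mu(T_y)$ is continuous on $I$ with image a compact interval $[m,M] \subseteq (0, \mu(Z))$, while the auxiliary $\nu(B) := \mu(f^{-1}(B))$ is strictly increasing and continuous on $C(I)$ and equals $g$ on singletons. My principal cut-point candidate is $T_{y^*}$ for $y^* \in (0,1)$ with $g(y^*) = s$: by the atomic dichotomy together with strict monotonicity of $\mu$ on proper inclusions, any $A \in \mu^{-1}(s)$ with $y^* \in f(A)$ must equal $T_{y^*}$ (either $A \subseteq T_{y^*}$ gives $\mu(A) \le \mu(T_{y^*}) = s$ with equality only at $A = T_{y^*}$, or $A \supseteq T_{y^*}$ symmetrically). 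Thus $\mu^{-1}(s) \setminus \{T_{y^*}\}$ splits into the disjoint nonempty open sets $\{A : f(A) \subseteq [0,y^*)\}$ and $\{A : f(A) \subseteq (y^*,1]\}$, handling every $s \in [m,M]$. For $s \in (M, \mu(Z))$, no fiber lies in $\mu^{-1}(s)$, every element is saturated, and $A \mapsto f(A)$ identifies $\mu^{-1}(s)$ homeomorphically with the level curve $\nu^{-1}(s)$ inside the nondegenerate part of the $2$-simplex $C(I) \cong \{(a,b) : 0 \le a \le b \le 1\}$; this curve is an arc, and any interior point $[a^*,b^*]$ yields a cut-point $f^{-1}([a^*,b^*])$.

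The main obstacle I foresee is the regime $s \in (0,m)$, in which $\mu^{-1}(s)$ is the union $\bigcup_{y \in I} Q_s^y$ of the Whitney slices $Q_s^y := \{A \in C(T_y) : \mu(A) = s\}$, with no fiber or saturated element available at this Whitney value. The naive attempt to remove a single element of some $Q_s^{y^*}$ fails, because the slices $Q_s^y$ vary continuously in $y$ and their one-sided accumulations fill up all of $Q_s^{y^*}$, leaving the complement connected. Overcoming this will require a sharper use of terminality and openness of $f$—either isolating a distinguished element of $Q_s^{y^*}$ onto which both one-sided accumulations must collapse, so that removing it genuinely separates $\mu^{-1}(s)$, or producing a monotone surjection from $\mu^{-1}(s)$ onto an arc with a singleton fiber at an interior point. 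This small-$s$ regime is where I expect the crux of the argument to lie.
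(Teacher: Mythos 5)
You have honestly flagged the gap yourself, and it is a genuine one: the regime $0<s<m$, where $m=\min_{y\in I}\mu\bigl(f^{-1}(y)\bigr)>0$, is not a peripheral difficulty but the crux of the theorem, and your construction as it stands gives you no cut-point candidate there. For such $s$ every element of $\mu^{-1}(s)$ is a proper subcontinuum of a single fiber, the map $A\mapsto f(A)$ is a monotone surjection of $\mu^{-1}(s)$ onto $I$ with all point-inverses $Q_s^y$ nondegenerate continua, and this is exactly the kind of structure your own argument for part (A) exploits to \emph{rule out} cut-points; so there is no reason to expect that a ``sharper use of terminality'' will rescue the unmodified construction. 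The paper's resolution is not a cleverer argument but a different space: it takes Anderson's $X\to I$ from Lemma~\ref{lemma:lewis1} and passes to the quotient $Z$ obtained by shrinking the single fiber $f^{-1}(1)$ to a point, giving a monotone open atomic $q\colon Z\to I$ with $q^{-1}(1)$ degenerate and all other fibers nondegenerate terminal. The effect is that the saturated continua $q^{-1}([a,b])$ now realize \emph{every} value in $(0,\mu(Z))$ with $a,b\in(0,1)$: since $B\mapsto q^{-1}(B)$ is continuous on $C(I)$ and $q^{-1}(\{y\})$ shrinks to a point as $y\to 1$, the composite with $\mu$ takes values arbitrarily close to $0$ and to $\mu(Z)$ on intervals inside $(0,1)$, and the intermediate value theorem applies. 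With such an $[a,b]$ in hand, the argument you already gave for $s\in[m,M]$ (uniqueness of the level-$s$ element whose $q$-image contains $[a,b]$, by atomicity plus strict monotonicity of $\mu$) works verbatim and uniformly for all $s\in(0,\mu(Z))$, so the three-case split disappears along with the problematic case. Your part (A) survives the modification essentially unchanged, with the degenerate fiber $q^{-1}(1)$ handled separately via monotonicity of $q$.

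Two smaller points. First, even within the regime you do treat, $s\in\{m,M\}$ may be attained by $y\mapsto\mu(T_y)$ only at $y^\ast\in\{0,1\}$, in which case one of your two open sets is empty and $T_{y^\ast}$ need not be a cut-point; as written, ``handling every $s\in[m,M]$'' overstates what the argument gives. Second, your proof of (A) via terminality of the fibers (the four-case analysis on $\overline U$, $\overline V$) is correct and is arguably cleaner than the paper's, which instead uses openness of $q$ and convergence of fibers; that part can be kept.
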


\begin{proof} By Lemma \ref{lemma:lewis1}  there exist a  continuum X 
and a monotone open map $f: X \to I$ such that $f^{-1}(y)$ is a 
nondegenerate terminal subcontinuum of  $X$ for each $y \in I$. Let $Z$ be the quotient space obtained from $X$ by shrinking $f^{-1}(1)$ to 
the point.  Let $p: X \to Z$ be the natural projection and $q=f \circ p^{-1}:Z \to I$. 
Note that $q$ is a monotone open map such that $q^{-1}(1)$ is a one point set 
and $q^{-1}(y)$ is a nondegenerate terminal subcontinuum of $Z$ 
for each $y \in [0,1)$ (hence $q$ is atomic). 

We show that $Z$ has the required properties.  
At first  we prove that $Z$ does not have a cut-point. 
Let $z \in Z$. If $\{z\}= q^{-1}(1)$, then 
$Z \setminus \{z\}= q^{-1}([0,1)).$ Since $q$ is monotone, 
$q^{-1}([0,1))$ is connected. Hence 
in this case $z$ is not a cut-point of $Z$. 
Assume that $z \in q^{-1}(t)$ for some $t \in [0,1)$ and 
$z$ is a cut-point of $Z$. Then there exist non-empty open subsets 
$O,H \subset Z$ such that $Z \setminus \{z\}= O \cup H$ and 
$O \cap H = \emptyset$. Then $O \cup \{z\}$ and $H \cup \{z\}$ are 
nondegenerate 
continua (see Proposition 6.3 of \cite{nadler1}). 
We may assume that $q^{-1}(t) \cap H \neq \emptyset$. 
Since $O \cup \{z\}$ is a nondegenerate continuum, there exists 
$\{z_i\}_{i=1}^{\infty} \subset O$ such that 
$\displaystyle \lim_{i \to \infty} z_i=z$. For 
each $i=1,2,...,$ let $t_i=q(z_i)$. Note that 
$q^{-1}(t_i) \subset O$ for each $i=1,2,...$ Since  $q$ is an open map, 
$\displaystyle \lim_{i \to \infty} q^{-1}(t_i)=q^{-1}(t)$. Then 
$q^{-1}(t) \subset O \cup \{z\}$. This is a contradiction because 
$q^{-1}(t) \cap H \neq \emptyset$. 
 So $Z$ does not have a cut-point.

Next we prove that $\mu^{-1}(s)$ has a cut-point for each Whitney map 
$\mu: C(Z) \to [0, \mu(Z)]$ and for each $s \in (0, \mu(Z))$.  
Take $a,b \in (0,1)$ such that 
$\mu(q^{-1}([a,b]))=s$ (this is possible because $q$ is a monotone 
open map
 and  $q^{-1}(1)$ is a one point set). 
Now we show that 

\vspace{1mm}

(1) $\mu^{-1}(s)= \{C \in \mu^{-1}(s) | C \subset q^{-1}([0,b])\} \cup 
\{C \in \mu^{-1}(s) | C \subset q^{-1}([a,1])\}.$

\vspace{1mm}

If not, there exists  $D \in \mu^{-1}(s)$ such that 
$D \cap q^{-1}([0,a)) \neq \emptyset \neq D \cap q^{-1}((b,1])$. Then 
$q(D)$ contains $[a,b]$ as a proper subcontinuum of $q(D)$. Since $q$ is 
atomic, $D=q^{-1}(q(D))$. So $D$ contains $q^{-1}([a,b])$ as a proper 
subcontinuum of $D$. 
This is a contradiction because $\{D, q^{-1}([a,b])\} \subset \mu^{-1}(s)$. 
Hence  (1) holds. 

It is easy to see that 

\vspace{1mm}

(2) $\{C \in \mu^{-1}(s) | C \subset q^{-1}([0,b])\} \cap 
\{C \in \mu^{-1}(s) | C \subset q^{-1}([a,1])\}= \{q^{-1}([a,b])\}$, and 

\vspace{1mm}

(3) $\{C \in \mu^{-1}(s) | C \subset q^{-1}([0,b])\}$ and 
$\{C \in \mu^{-1}(s) | C \subset q^{-1}([a,1])\}$ are 
nondegenerate subcontinua of $\mu^{-1}(s)$. 

\vspace{1mm}

By (1), (2) and (3) we see that $q^{-1}([a,b])$ is a cut-point of 
$\mu^{-1}(s)$.  \end{proof}

By this result,  we see that the property of having cut-points is not a 
Whitney reversible property.

\vspace{5pc}

Eiichi Matsuhashi
 
Department of Mathematics 

Faculty of Engineering 

Yokohama National University 

Yokohama, 240-8501, Japan

e-mail: mateii@ynu.ac.jp

\end{document}